\newcommand{\N}{\mathcal{N}}
\newtheorem{theorem}{Theorem}[section]
\newtheorem{lemma}[theorem]{Lemma}
\theoremstyle{definition}
\newtheorem{definition}{Definition}[section]
\theoremstyle{remark}
\newtheorem{remark}{Remark}
\title{\LARGE \bf
  Discrete-Time High Order Tuner With A Time-Varying Learning Rate\thanks{This work is supported by the Boeing Strategic University Initiative.}
}
\author{Yingnan Cui\thanks{Y. Cui and A.M. Annaswamy are with the Department of Mechanical Engineering, Massachusetts Institute of Technology, Cambridge, MA, 02139.} and Anuradha M. Annaswamy
}
\date{}
\begin{document}
\allowdisplaybreaks

\maketitle
\thispagestyle{empty}
\pagestyle{empty}

\begin{abstract}

  We propose a new discrete-time online parameter estimation algorithm that combines two different aspects, one that adds momentum, and another that includes a time-varying learning rate. It is well known that recursive least squares based approaches that include a time-varying gain can lead to exponential convergence of parameter errors under persistent excitation, while momentum-based approaches have demonstrated a fast convergence of tracking error towards zero with constant regressors. The question is when combined, will the filter from the momentum method come in the way of exponential convergence. This paper proves that exponential convergence of parameter is still possible with persistent excitation. Simulation results demonstrated competitive properties of the proposed algorithm compared to the recursive least squares algorithm with forgetting.



\end{abstract}


\section{Introduction}
\label{sec:introduction}
An essential part of any adaptive control algorithm is reliable, fast online parameter estimation\cite{Narendra2005,Goodwin_1984}. Beyond the basic gradient descent method, a large amount of works have focused on proposing provably stable, more efficient algorithms for online parameter estimation in adaptive control\cite{Gaudio_2021a,Gaudio20AC,bruce20,bruce20a,goel2020}.

It is well known that the introduction of a time-varying learning rate leads to exponential learning of the parameters in the presence of persistent excitation. Both recursive least squares (RLS) and RLS with forgetting have been frequently adopted for parameter estimation\cite{johnstone1982}. This idea of adopting time-varying learning rate has also led to some major breakthroughs in the optimization community. AdaGrad, for example, adapts the learning rate to the adjustment of parameters, applying larger updates for infrequently adjusted parameters and smaller updates for frequently adjusted parameters\cite{Duchi_2011}. AdaDelta adopts an exponential decaying average of the past gradients to address AdaGrad's aggressive, monotonically decaying learning rate\cite{Zeiler_2012}.

Yet another recent set of results that leads to accelerated performance, such as fast reduction of a loss function, is through the addition of momentum. It is a well observed fact that gradient descent method often performs badly around saddle points and local optima\cite{du2017gradient}, and provides a convergence rate in $\mathcal{O}(1/k)$, where $k$ is the iteration number. In contrast, Nesterov's acceleration, which adopts the idea of momentum, is a method that helps accelerate gradient descent and can lead to a convergence rate of $\mathcal{O}(1/k^2)$ when the loss function is smooth \cite{Nesterov_2018}. In problems of parameter estimation, it has been shown more recently that momentum-based methods, also known as high-order tuners (HT), can lead to acceleration even with time-varying regressors if the loss is strongly convex\cite{Gaudio20AC}.

In real-time systems, it is of paramount importance to have both acceleration in performance, i.e. in a fast decrease of the loss function, and in learning, i.e. fast convergence of the parameter estimates to their true values. The question therefore is if HT can be combined with time-varying learning rates and lead to both accelerated performance and accelerated learning. Since HT includes an additional filter, it needs to be ensured that the filtering action does not compromise the property of fast learning in the presence of time-varying gains. In this paper, we show that is not the case and that persistent excitation guarantees exponential convergence of the parameter estimates to the true value.

The specific HT that we consider is that based on Heavy Ball method (HB) that is proposed by Polyak\cite{Polyak64}. We add a time-varying gain matrix in addition to the momentum term that is present in the HB method. Through careful adjustment of the time-varying gain matrix, we show that the gain matrix remains bounded, does not go to zero with persistent excitation, and that the parameter estimates converge to their true values exponentially. This is the central contribution of this paper. All results are in the context of a nonlinear ARMA model with unknown parameters that are to be estimated.

Section \ref{sec:problem-statement} states the problem we want to solve. Section \ref{sec:algorithm} presents the algorithm. We discuss stability properties of the algorithm in section \ref{sec:stability-analysis} and show numerical simulations in section \ref{sec:numer-sim}. Section \ref{sec:conclusion} summarizes the paper and discusses future works.

\section{Problem Statement}
\label{sec:problem-statement}
We consider a class of discrete-time nonlinear plant models of the form
\begin{align}
  \label{eq:10}
  y_k = -\sum_{i=1}^{n}a_{i}^*y_{k-i} + \sum_{j=1}^{m}b_{j}^*u_{k-j-d} + \sum_{\ell=1}^pc^*_{\ell}f_\ell(y_{k-1}, \ldots, y_{k-n}, u_{k-1-d}, \ldots, u_{k-m-d}),
\end{align}
where $a_{i}^*$, $b_{j}^*$ and $c_{\ell}^*$ are unknown parameters that are constant and need to be identified, and $d$ is a known time-delay. The function $f_\ell$ is an analytic function and is assumed to be such that the system in \eqref{eq:10} is bounded-input-bounded-output (BIBO) stable. Denote $z_{k-1} = [y_{k-1}, \ldots, y_{k-n}]^\top$ and $v_{k-d-1} = [u_{k-1-d}, \ldots, u_{k-m-d}]^\top$. We rewrite \eqref{eq:10} in the form of a linear regression
\begin{equation}
  \label{eq:11}
  y_k = \phi_k^\top\theta^*,
\end{equation}
where $\phi_k = [z_{k-1}^\top, v_{k-d-1}^\top, f_1(z_{k-1}^\top, v_{k-d-1}^\top), \ldots, \allowbreak f_p(z_{k-1}^\top, v_{k-d-1}^\top)]^\top$ is a regressor determined by exogenous signals and $\theta^* = [a_{1}^*, \ldots, a_{n}^*, b_{1}^*, \ldots, b_{m}^*,c_{1}^*, \ldots, c_{\ell}^*]^\top$ is the underlying unknown parameter vector. We propose to identify the parameter $\theta^*$ as $\theta_k$ using an estimator
\begin{equation}
  \hat{y}_k = \phi_k^\top\theta_k,
  \label{eq:estimate}
\end{equation}
which leads to a prediction error
\begin{equation}
  \label{eq:12}
  e_{y,k} = \phi_k^{\top}\tilde{\theta}_k,
\end{equation}
where $e_{y,k} = \hat{y}_k - y_k$ is the output prediction error and $\tilde{\theta}_k = \theta_k - \theta^*$ is the parameter error. The goal of parameter identification is to design an iterative procedure such that the parameter error $\|\tilde{\theta}_k\|$ converges to zero exponentially fast.

The iterative procedure for estimating the parameters is based on a squared loss function,
\begin{equation}
  \label{eq:7}
  L_k(\theta_k) = \frac{1}{2}e_{y,k}^2 = \frac{1}{2}\tilde{\theta}_k^\top\phi_k\phi_k^\top\tilde{\theta}_k,
\end{equation}
where the subscript $k$ in $L_k$ denotes $k$th iteration. In the literature, a normalized gradient descent algorithm has been shown to be stable although having a slow convergence rate \cite{Goodwin_1984}
\begin{equation}
  \label{eq:gd}
  \theta_{k+1} = \theta_k - \alpha\frac{\nabla L_k(\theta_k)}{\N_k}, \quad 0 < \alpha < 2,
\end{equation}
where $\N_k$ is a normalizing signal and is defined as $\N_k = 1 + \|\phi_k\|^2$.

The following definitions will be utilized for proving the main results.

\begin{definition}
  The regressor $\phi_k$ is said to satisfy the persistent excitation (PE) condition over an interval $\Delta T$, if for all $k \geq 0$,
  \begin{equation}
    \label{eq:pe}
    \epsilon_1 I \leq \sum_{i=k-\Delta T}^{k-1}\phi_i\phi_i^\top \leq \epsilon_2 I.
  \end{equation}
  \label{def:pe}
\end{definition}
\begin{definition}[From \cite{Luenberger1997}]
  For any fixed $p\in[1,\infty)$, a sequence of scalars $\xi=\{\xi_0, \xi_1, \ldots\}$ is defined to belong to $\ell_p$ if
  \begin{equation}
    \label{eq:102}
    \|\xi\|_\infty \equiv \left(\lim_{k\rightarrow\infty}\sum_{i=0}^k\|\xi_i\|^p\right)^{1/p} < \infty.
  \end{equation}
  When $p=\infty$, $\xi\in\ell_\infty$ if
  \begin{equation}
    \label{eq:103}
    \|\xi\|_{\ell_\infty} \equiv \sup_{i\geq 0}\|\xi_i\| < \infty
  \end{equation}
\end{definition}

Let $k \geq 0$ and consider the following time-varying dynamic system
\begin{equation}
  \label{eq:8}
  x_{k+1} = f(k, x_k),
\end{equation}
where $x_k\in\mathcal{D}$, $k\geq 0$, $\mathcal{D}$ is an open set such that $0 \in \mathcal{D}$, $f: \mathbb{N}\times \mathcal{D} \rightarrow\mathbb{R}^n$ is continuous and for all $k\in\mathbb{N}$, $f(k, 0) = 0$. The following definition and theorem of uniform global exponential stability is modified from \cite[Page 783-785]{chellaboina2008}.




\begin{definition}[Uniform global exponential stability]
  The origin in \eqref{eq:8} is uniformly globally exponentially stable if there exist scalars $c_1 > 0$ and $c_2 > 1$ such that $\|x_k\| \leq c_1 \|x_0\|\exp(-c_2^{-1}k)$, for all $x_0\in\mathbb{R}^n$.
  \label{def:uggs}
\end{definition}


\begin{theorem}
  If there exist a continuous function $V:\mathbb{N}^+ \times \cal{D} \rightarrow \mathbb{R}$ and positive constants $\bar\alpha, \bar\beta, \bar\gamma$ such that
  \begin{align}
    \label{eq:9}
    \bar\alpha\|x\|^2 \leq V(k, x) \leq \bar\beta \|x\|^2, \quad &(k, x) \in \mathbb{N}^+ \times \cal{D}, \\
    \label{eq:13}
    \Delta V \leq -\bar\gamma\|x\|^2, \quad &(k, x) \in \mathbb{N}^+ \times \cal{D},
  \end{align}
  then the origin in \eqref{eq:8} is uniformly globally exponentially stable.
  \label{theo:geo-stable}
\end{theorem}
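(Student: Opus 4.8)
The plan is to run the classical discrete-time Lyapunov argument: first extract a geometric decay rate for $V$ along trajectories from \eqref{eq:13} and the upper bound in \eqref{eq:9}, then transfer that rate to $\|x_k\|$ using the lower bound in \eqref{eq:9}. Concretely, along any solution of \eqref{eq:8}, writing $\Delta V = V(k+1, x_{k+1}) - V(k, x_k)$ with $x_{k+1} = f(k,x_k)$, the hypotheses give $V(k+1, x_{k+1}) \le V(k, x_k) - \bar\gamma \|x_k\|^2$, and since $\|x_k\|^2 \ge V(k, x_k)/\bar\beta$ this becomes
\begin{equation*}
  V(k+1, x_{k+1}) \le \left(1 - \frac{\bar\gamma}{\bar\beta}\right) V(k, x_k) =: \rho\, V(k, x_k).
\end{equation*}
A small preliminary step is to check $\rho \in [0,1)$: because $V(k+1, x_{k+1}) \ge \bar\alpha \|x_{k+1}\|^2 \ge 0$, inequality \eqref{eq:13} forces $\bar\gamma \|x_k\|^2 \le V(k, x_k) \le \bar\beta \|x_k\|^2$, hence $\bar\gamma \le \bar\beta$, so $\rho \ge 0$; and $\bar\gamma, \bar\beta > 0$ give $\rho < 1$.

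Next I would iterate to get $V(k, x_k) \le \rho^k V(0, x_0)$ for all $k$ by induction, and then feed this back into the two-sided bound \eqref{eq:9}:
\begin{equation*}
  \bar\alpha \|x_k\|^2 \le V(k, x_k) \le \rho^k V(0, x_0) \le \bar\beta \rho^k \|x_0\|^2,
  \qquad\text{hence}\qquad
  \|x_k\| \le \sqrt{\tfrac{\bar\beta}{\bar\alpha}}\, \rho^{k/2}\, \|x_0\|.
\end{equation*}
Finally, when $\rho \in (0,1)$ I would write $\rho^{k/2} = \exp\!\big(-\tfrac12 \ln(1/\rho)\, k\big)$, so the claim holds with $c_1 = \sqrt{\bar\beta/\bar\alpha}$ and any $c_2 \ge \max\{2/\ln(1/\rho),\, 2\}$; enlarging $c_2$ only weakens the bound, so $c_2 > 1$ as in Definition~\ref{def:uggs} can always be arranged. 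The degenerate case $\rho = 0$ (i.e.\ $\bar\gamma = \bar\beta$) is immediate, since then $x_k = 0$ for all $k \ge 1$ and $\|x_0\| \le c_1 \|x_0\|$ holds as $c_1 = \sqrt{\bar\beta/\bar\alpha} \ge 1$. For the global statement one takes $\mathcal{D} = \mathbb{R}^n$, so there is no issue of trajectories leaving the region where the hypotheses are valid.

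I do not expect a genuine obstacle here — the argument is routine. The only points needing a moment's care are (i) confirming that the per-step factor $\rho$ is strictly below $1$ \emph{and} nonnegative, which is exactly where the lower bound $\bar\alpha\|x\|^2 \le V$ together with nonnegativity of $V$ enters, and (ii) the bookkeeping that converts geometric decay of $\|x_k\|^2$ into an exponential bound on $\|x_k\|$ with constants matching the specific form $\|x_k\| \le c_1 \|x_0\| \exp(-c_2^{-1}k)$, $c_2 > 1$, of Definition~\ref{def:uggs}.
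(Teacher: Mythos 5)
Your argument is correct. Note that the paper itself gives no proof of this theorem; it is stated as a result modified from the cited reference (Chellaboina--Haddad), so there is no in-paper argument to compare against. What you wrote is the standard discrete-time Lyapunov comparison: the contraction factor $\rho = 1 - \bar\gamma/\bar\beta$ obtained by combining \eqref{eq:13} with the upper bound in \eqref{eq:9}, iteration to get $V(k,x_k)\le\rho^k V(0,x_0)$, and the lower bound to pull the geometric decay back to $\|x_k\|$. Your two points of care are the right ones, and both are handled correctly: nonnegativity of $V$ forces $\bar\gamma\le\bar\beta$ so that $\rho\in[0,1)$, and the rewriting $\rho^{k/2}=\exp(-\tfrac12\ln(1/\rho)\,k)$ together with the freedom to enlarge $c_2$ matches the exact form required by Definition~\ref{def:uggs}, including the degenerate case $\rho=0$. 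The only caveat worth flagging is one you already noted: the hypotheses are stated on $\mathcal{D}$ while the conclusion is global, so the statement implicitly requires $\mathcal{D}=\mathbb{R}^n$ (which is how it is used later in the paper, where the Lyapunov function is quadratic on the whole space).
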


\section{The Algorithm}
\label{sec:algorithm}
The Heavy Ball method, initially proposed in \cite{Polyak64}, achieves acceleration by adding a momentum term in addition to normalized gradient descent method
\begin{equation}
  \label{eq:1}
  \theta_{k+1} = \theta_k - \bar{\gamma}\frac{\nabla L_k(\theta_k)}{\N_k} + \bar\beta(\theta_k - \theta_{k-1}), 
\end{equation}
where $\bar\gamma$ is the learning rate constant and $\bar\beta$ is a constant that controls the momentum. In this work, we consider a time-varying matrix $F_k$ as an alternative to the constant $\bar\gamma$ in an effort to not only achieve fast convergence of the output error to zero but also have parameter error $\tilde{\theta}_k$ to zero. We propose the resulting algorithm as
\begin{align}
  \vartheta_{k+1} &= \vartheta_k - F_{k}\frac{\nabla L_k(\theta_{k+1})}{\N_k},\label{eq:2} \\
  \theta_{k+1} &= \theta_k - \beta(\theta_k - \vartheta_k),\label{eq:3}
\end{align}
where $\N_k = 1 + \eta\phi_k^\top F_{k-1}\phi_k$ and $F_k$ is updated as
\begin{equation}
  \label{eq:4}
  F_{k} = \lambda\left(F_{k-1} - \kappa\frac{F_{k-1}\phi_k\phi_k^\top F_{k-1}}{\N_k}\right).
\end{equation}
In \eqref{eq:2}, \eqref{eq:3} and \eqref{eq:4}, $\lambda$, $\kappa$, $\beta$ and $\eta \geq \kappa$ are positive hyperparameters whose bounds will be defined later. The update of $F_k$ is similar to the covariance matrix update in recursive least squares (RLS) algorithm with forgetting\cite{Goodwin_1984} but differs in the choice of the normalization and in the update of $F_k$. The main contribution of this paper is to show that the algorithm in \eqref{eq:2}-\eqref{eq:4} results in exponential convergence under PE.

\section{Stability Analysis}
\label{sec:stability-analysis}
In this section, we show that the algorithm in \eqref{eq:2}, \eqref{eq:3} and \eqref{eq:4} guarantees exponential convergence for suitable choices of the hyperparameters $\lambda$, $\kappa$, $\beta$ and $\eta$. Let
\begin{equation}
  \label{eq:mu}
  \mu = \min\{c_1, c_2\},
\end{equation}
where
\begin{align}
  \label{eq:c1}
  c_1 &= \left(1 - \frac{1}{\lambda}\right)F_{\max}^{-1} \geq 0,\\
  \label{eq:c2}
  c_2 &= \left\{1 - (1 - \beta)^2\left[\frac{1}{\lambda} + \frac{\kappa}{\lambda(\eta - \kappa)} + \frac{4\lambda}{\eta^2}\right]\right\}F_{\max}^{-1} \geq 0,
\end{align}
and $F_{\max}$ is the upper bound of $F_k$ under the persistent excitation in Definition \ref{def:pe}. When $\lambda = 1$, there is no forgetting in $F_k$ and from \eqref{eq:4}, $F_k\leq F_{k-1}$. Therefore $F_{\max} = \sigma_{\max}\{F_0\}$. When $\lambda > 1$, the following lemma gives the upper bound for $F_k$.
\begin{lemma}
  When the regressor $\phi_k$ satisfies PE condition in Definition \ref{def:pe}, the hyperparameters in \eqref{eq:1}, \eqref{eq:2} and \eqref{eq:3} satisfy $\frac{\kappa\epsilon_1(\lambda - 1)}{\lambda(\lambda^{\Delta T} - 1)} > (\eta - \kappa)\max_{i}\|\phi_i\|^2$ and $F_0 \leq \frac{F_{\max}}{\lambda^{\Delta T - 1}}I$, there exists $F_{\max}^{-1} = \frac{\kappa\epsilon_1(\lambda - 1)}{\lambda(\lambda^{\Delta T} - 1)} - (\eta - \kappa)\max_{i}\|\phi_i\|^2 \in \mathbb{R}^+$ such that $F_k \leq F_{\max}I$ for all $k \geq 0$.
  \label{lemma:1}
\end{lemma}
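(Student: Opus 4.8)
The plan is to pass to the inverse matrices $P_k := F_k^{-1}$, for which \eqref{eq:4} becomes an affine recursion, and to prove the equivalent statement $P_k \geq F_{\max}^{-1}I$ by strong induction on $k$, with $F_{\max}$ read off from the displayed formula. First one checks that the inverses exist: because $\eta\geq\kappa$, the matrix in parentheses in \eqref{eq:4} has smallest eigenvalue $\bigl(1+(\eta-\kappa)\phi_k^\top F_{k-1}\phi_k\bigr)/\bigl(1+\eta\phi_k^\top F_{k-1}\phi_k\bigr)>0$, so $F_0>0$ forces $F_k>0$ for all $k$. Applying the Sherman--Morrison identity to \eqref{eq:4} with $\N_k=1+\eta\phi_k^\top F_{k-1}\phi_k$, the products $F_{k-1}^{-1}F_{k-1}\phi_k$ collapse and one obtains the clean recursion
\[
  P_k = \tfrac{1}{\lambda}\bigl(P_{k-1}+g_k\,\phi_k\phi_k^\top\bigr),\qquad g_k:=\frac{\kappa}{1+(\eta-\kappa)\phi_k^\top F_{k-1}\phi_k}>0,
\]
which unrolls to $P_k=\lambda^{-k}P_0+\sum_{i=1}^{k}\lambda^{-(k-i+1)}g_i\,\phi_i\phi_i^\top$.

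For the base case $0\leq k\leq \Delta T-1$, discarding the positive semidefinite terms gives $P_k\geq\lambda^{-k}P_0\geq\lambda^{\Delta T-1-k}F_{\max}^{-1}I\geq F_{\max}^{-1}I$, using the hypothesis $F_0\leq\lambda^{-(\Delta T-1)}F_{\max}I$ and $\lambda>1$. For the inductive step, fix $k\geq\Delta T$ and assume $F_j\leq F_{\max}I$ for all $j<k$. Then $\phi_i^\top F_{i-1}\phi_i\leq F_{\max}\max_j\norm{\phi_j}^2$ for every $i\leq k$ (the maximum being finite by the BIBO assumption), so $g_i\geq g_{\min}:=\kappa/\bigl(1+(\eta-\kappa)F_{\max}\max_j\norm{\phi_j}^2\bigr)>0$ throughout. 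Retaining only the last $\Delta T$ terms of the unrolled sum,
\[
  P_k \;\geq\; g_{\min}\sum_{i=k-\Delta T+1}^{k}\frac{1}{\lambda^{k-i+1}}\,\phi_i\phi_i^\top .
\]

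The decisive estimate is the scalar ``mediant'' inequality $\sum_i a_i/w_i\geq\bigl(\sum_i a_i\bigr)/\bigl(\sum_i w_i\bigr)$ for $a_i\geq0$, $w_i>0$ (proved by the two-term case $a_1/w_1+a_2/w_2-\tfrac{a_1+a_2}{w_1+w_2}=\tfrac{a_1w_2^2+a_2w_1^2}{w_1w_2(w_1+w_2)}\geq0$ and induction): applied with $a_i=(v^\top\phi_i)^2$ for each unit vector $v$ and $w_i=\lambda^{k-i+1}$, it upgrades to the matrix inequality $\sum_i\lambda^{-(k-i+1)}\phi_i\phi_i^\top\geq\bigl(\sum_i\lambda^{k-i+1}\bigr)^{-1}\sum_i\phi_i\phi_i^\top$. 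Since $\sum_{i=k-\Delta T+1}^{k}\lambda^{k-i+1}=\lambda+\lambda^2+\cdots+\lambda^{\Delta T}=\lambda(\lambda^{\Delta T}-1)/(\lambda-1)$ and $\sum_{i=k-\Delta T+1}^{k}\phi_i\phi_i^\top\geq\epsilon_1 I$ (Definition \ref{def:pe} applied at $k+1$), this yields $P_k\geq g_{\min}\,\epsilon_1\,(\lambda-1)\bigl[\lambda(\lambda^{\Delta T}-1)\bigr]^{-1}I$.

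It remains to check that this coefficient is at least $F_{\max}^{-1}$. Substituting $g_{\min}$ and clearing denominators, the inequality $g_{\min}\epsilon_1(\lambda-1)\bigl[\lambda(\lambda^{\Delta T}-1)\bigr]^{-1}\geq F_{\max}^{-1}$ rearranges to exactly $F_{\max}^{-1}\leq\kappa\epsilon_1(\lambda-1)\bigl[\lambda(\lambda^{\Delta T}-1)\bigr]^{-1}-(\eta-\kappa)\max_j\norm{\phi_j}^2$, which holds with equality by the definition of $F_{\max}$; the hyperparameter inequality assumed in the lemma is precisely the condition that this quantity is positive, so $F_{\max}\in\mathbb{R}^+$ is well defined. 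This closes the induction and gives $F_k\leq F_{\max}I$ for all $k\geq0$. I expect the main obstacle to be the circularity: the uniform lower bound on $g_k$ needed in the inductive step passes through $\phi_k^\top F_{k-1}\phi_k$ and therefore presupposes the very bound $F_{k-1}\leq F_{\max}I$ being propagated; this forces a strong induction over a window of length $\Delta T$ so that persistent excitation can be invoked, and it also forces the more delicate mediant inequality (rather than a cruder smallest-weight bound) in order to reproduce the exact constant $\lambda(\lambda^{\Delta T}-1)/(\lambda-1)$ appearing in the statement.
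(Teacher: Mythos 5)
Your proof is correct and follows essentially the same route as the paper's: both invert the update \eqref{eq:4} via the matrix inversion lemma to obtain the additive recursion on $F_k^{-1}$, propagate the geometric weights $\lambda^{-(k-i+1)}$ across one PE window, and use the uniform denominator bound $1+(\eta-\kappa)\phi_i^\top F_{i-1}\phi_i\leq 1+(\eta-\kappa)F_{\max}\max_j\|\phi_j\|^2$ to land exactly on the stated $F_{\max}$. The only real difference is presentational: you make explicit the strong induction that resolves the circularity (the denominator bound presupposes $F_{i-1}\leq F_{\max}I$), which the paper leaves implicit in its terse chain of inequalities.
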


\begin{proof}
  Denote $\Delta_{\max} = 1 + (\eta - \kappa)F_{\max}\max_{i}\|\phi_i\|^2$. From \eqref{eq:4}, for all $k\geq 0$, $F_k^{-1} \leq \lambda F_{k+1}^{-1}$ and $\phi_k\phi_k^\top / \Delta_{\max} \leq \lambda F_k^{-1} / \kappa$. Since $F_0 \leq \frac{F_{\max}}{\lambda^{\Delta T - 1}}I$, $F_{k} \leq F_{\max}I$ for all $0\leq k \leq \Delta T - 1$. For all $k\geq \Delta T$, apply the PE definition, we obtain
  \begin{align*}
    \frac{\epsilon_1 I}{\Delta_{\max}} &\leq \sum_{i=k-\Delta T}^{k-1} \frac{\phi_i\phi_i^\top}{\Delta_{\max}} \\
                                       &\leq \frac{\lambda}{\kappa} (1 + \lambda + \cdots + \lambda^{\Delta T - 1}) F_{k-1}^{-1} \\
                                       &\leq \frac{\lambda}{\kappa} \frac{\lambda^{\Delta T} - 1}{\lambda - 1} F_{k-1}^{-1}
  \end{align*}
  Therefore $F_{k-1}^{-1} \geq F_{\max}^{-1}I$ for all $k \geq \Delta T$.
\end{proof}
\begin{remark}
  Due to the differences in the update of the learning rates between our algorithm and RLS, certain constraints on the hyperparameters have to be assumed for proof of the upper bound of $F_k$. This is mainly due to the choice of the denominator $\N_k$ in \eqref{eq:4}.
\end{remark}
When $\lambda = 1$, it can be shown that under PE $F_k\rightarrow 0$ as $k\rightarrow\infty$. The following lemma gives a lower bound on $F_k$ under PE when $\lambda > 1$.
\begin{lemma}
  When the regressor $\phi_k$ satisfies PE condition in Definition \ref{def:pe}, there exists $F_{\min} \in \mathbb{R}^+$, where $F_{\min}^{-1}I = F_{\Delta T - 1}^{-1} + \frac{\kappa\epsilon_2 I}{\lambda(1 - 1 / \lambda^{\Delta T})}$, such that $F_k \geq F_{\min}$ for all $k \geq 0$.
\end{lemma}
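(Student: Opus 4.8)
The plan is to pass to the inverse matrices, since the desired bound $F_k \ge F_{\min}I$ is equivalent to $F_k^{-1} \le F_{\min}^{-1}I$. First I would apply the matrix inversion lemma (Sherman--Morrison) to the rank-one update \eqref{eq:4}. With $\N_k = 1 + \eta\phi_k^\top F_{k-1}\phi_k$ a direct computation gives the clean recursion
\begin{equation*}
  F_k^{-1} = \frac{1}{\lambda}F_{k-1}^{-1} + \frac{\kappa}{\lambda}\,\frac{\phi_k\phi_k^\top}{1 + (\eta - \kappa)\phi_k^\top F_{k-1}\phi_k}.
\end{equation*}
This step is valid once one knows $F_{k-1} \succ 0$ (so the matrix being inverted is invertible and the denominator is positive); positive definiteness of every $F_k$ follows by induction from $F_0 \succ 0$, either directly from \eqref{eq:4} using $\eta \ge \kappa$, or from the identity above, where $F_k^{-1}$ is a sum of a positive definite and a positive semidefinite matrix. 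Because $\eta \ge \kappa$ and $F_{k-1}\succeq 0$, the denominator is at least $1$, so I may discard it to get the key scalar-free upper bound $F_k^{-1} \le \tfrac{1}{\lambda}F_{k-1}^{-1} + \tfrac{\kappa}{\lambda}\phi_k\phi_k^\top$. Note this route needs only $\eta\ge\kappa$ and $F_k\succ 0$, not the upper bound $F_{\max}$ from Lemma \ref{lemma:1}.

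Next I would unroll this inequality down to index $\Delta T - 1$: for $k \ge \Delta T - 1$,
\begin{equation*}
  F_k^{-1} \le \frac{1}{\lambda^{\,k - \Delta T + 1}}F_{\Delta T - 1}^{-1} + \frac{\kappa}{\lambda}\sum_{\ell = 0}^{k - \Delta T}\frac{1}{\lambda^{\ell}}\,\phi_{k-\ell}\phi_{k-\ell}^\top .
\end{equation*}
The first term is bounded by $F_{\Delta T - 1}^{-1}$ since $\lambda > 1$. For the weighted sum I would partition the index set $\{0,\ldots,k-\Delta T\}$ into consecutive blocks of length $\Delta T$: on the $s$-th block every weight $\lambda^{-\ell}$ is at most $\lambda^{-s\Delta T}$, while the corresponding indices $k-\ell$ form a window of $\Delta T$ consecutive nonnegative integers, so by the upper PE bound in Definition \ref{def:pe} the block sum of $\phi_{k-\ell}\phi_{k-\ell}^\top$ is at most $\epsilon_2 I$ (a partial final block, having fewer positive semidefinite terms, only makes it smaller). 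Summing the resulting geometric series in $s$ yields $\tfrac{\kappa}{\lambda}\cdot\tfrac{\epsilon_2}{1-\lambda^{-\Delta T}}I = \tfrac{\kappa\epsilon_2}{\lambda(1-1/\lambda^{\Delta T})}I$, hence $F_k^{-1}\le F_{\Delta T-1}^{-1} + \tfrac{\kappa\epsilon_2 I}{\lambda(1-1/\lambda^{\Delta T})} = F_{\min}^{-1}I$ for all $k\ge \Delta T-1$. For the finitely many remaining steps $0\le k<\Delta T-1$, each $F_k$ is positive definite by the induction above, hence bounded below by a positive multiple of $I$; enlarging $F_{\min}^{-1}$ to dominate these finitely many cases gives the claim for all $k\ge 0$.

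I expect the main obstacle to be the bookkeeping in the block decomposition: one must choose the block boundaries so that each block of indices $\{k-\ell\}$ lands inside a genuine PE window of length $\Delta T$ with all indices nonnegative, and align the geometric weights with those blocks so that the sum collapses to exactly $\tfrac{\kappa\epsilon_2}{\lambda(1-1/\lambda^{\Delta T})}I$ rather than a looser constant. A secondary point is the careful derivation of the matrix-inversion identity for $F_k^{-1}$ and the verification that its denominator $1+(\eta-\kappa)\phi_k^\top F_{k-1}\phi_k$ is strictly positive, which is precisely where the hypothesis $\eta\ge\kappa$ and the positive definiteness of $F_{k-1}$ are used.
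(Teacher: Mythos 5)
Your proof is correct and arrives at the paper's exact constant, but it organizes the estimates differently from the paper. Both arguments begin from the inverted recursion $F_k^{-1}=\tfrac{1}{\lambda}F_{k-1}^{-1}+\tfrac{\kappa}{\lambda}\,\phi_k\phi_k^\top/\bigl(1+(\eta-\kappa)\phi_k^\top F_{k-1}\phi_k\bigr)$ with the denominator dropped, and both combine the upper PE bound with a geometric series. The difference is in the bookkeeping: you unroll the one-step inequality all the way down to $F_{\Delta T-1}^{-1}$ and then control the weighted sum $\sum_\ell \lambda^{-\ell}\phi_{k-\ell}\phi_{k-\ell}^\top$ by partitioning it into length-$\Delta T$ blocks, bounding each block by $\epsilon_2 I$ times the largest weight it contains. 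The paper instead first uses the one-step growth relation $F_i^{-1}\le\lambda F_{i+1}^{-1}$ to dominate $F_k^{-1}$ by a normalized forward window sum $\sum_{i=k-1}^{k+\Delta T-2}F_{i+1}^{-1}$, shows via PE that these window sums satisfy a contraction-plus-constant recursion, and iterates that recursion. Your route avoids the auxiliary window-sum recursion and is arguably the more direct of the two; the paper's avoids the block-partition argument. One place where you are more careful than the paper: both derivations really establish $F_k^{-1}\le F_{\min}^{-1}I$ only for $k\ge\Delta T-1$ (the paper states it for $k\ge\Delta T$), whereas the lemma claims it for all $k\ge0$; you flag this and patch the finitely many initial indices by enlarging the constant, which technically alters the stated value of $F_{\min}$ but is an honest fix for a gap the paper leaves silent.
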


\begin{proof}
  From \eqref{eq:4}, $F_k^{-1} \leq \lambda F_{k+1}^{-1}$, therefore for all $k \geq \Delta T$,
  \begin{align*}
    F_k^{-1} &\leq \frac{1 - 1 / \lambda}{1 - 1 / \lambda^{\Delta T}} \sum_{i=k-1}^{k+\Delta T - 2} F_{i+1}^{-1} \\
    &\leq \frac{1 - 1 / \lambda}{1 - 1 / \lambda^{\Delta T}} \left(\frac{1}{\lambda}\sum_{i=k-1}^{k+\Delta T - 2} F_i^{-1} + \frac{\kappa}{\lambda}\epsilon_2 I\right) \\
    &\leq \frac{1 - 1 / \lambda}{1 - 1 / \lambda^{\Delta T}} \left(\frac{1}{\lambda^k}\sum_{i=0}^{\Delta T - 1} F_i^{-1} + \frac{\kappa}{\lambda}\frac{1 - 1 / \lambda^{k}}{1 - 1 / \lambda} \epsilon_2 I\right) \\
    &\leq \lambda^{\Delta T - k} F_{\Delta T - 1}^{-1} + \frac{\kappa}{\lambda}\frac{1 - 1 / \lambda^k}{1 - 1 / \lambda^{\Delta T}}\epsilon_2 I \\
    &\leq F_{\Delta T - 1}^{-1} + \frac{\kappa\epsilon_2 I}{\lambda(1 - 1 / \lambda^{\Delta T})} \\
    &= F_{\min}^{-1}I
  \end{align*}
\end{proof}

\begin{remark}
  When $\lambda > 1$, from the expressions of $F_{\max}$ and $F_{\min}$, we can observe that $F_{\max}$ and $\epsilon_1$ are inversely correlated, $F_{\min}$ and $\epsilon_2$ are inversely correlated. In the presence of weak excitation signals, $F_{\max}$ increases and can potentially become infinite, which is similar to the covariance matrix update in RLS with forgetting.
\end{remark}

The following theorem states accelerated learning properties of the proposed algorithm, and corresponds to the main result of this paper.
\begin{theorem}
  \label{theo:1}
  With $\lambda \geq 1$, $\kappa < 2\lambda$, $0 < \beta < 2$ and $\eta \geq \max\left\{\frac{\lambda(\kappa + 2\lambda) + \lambda\sqrt{5\kappa^2 - 4\lambda\kappa + 4\lambda^2}}{2\lambda - \kappa}, \frac{4\lambda(1 - \beta)^2}{\lambda - (1 - \beta)^2}\right\}$, the update law in \eqref{eq:2}, \eqref{eq:3} and \eqref{eq:4} will result in (i) $\vartheta_k - \theta^* \in \ell_\infty$, $\theta_k - \vartheta_k \in \ell_\infty$, and (ii) $\|\vartheta_k - \theta^*\|^2 + \|\theta_k - \vartheta_k\|^2 \leq \exp(-\mu k) V_0$, where $\mu$ is defined in \eqref{eq:mu}.
\end{theorem}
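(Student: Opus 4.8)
The plan is to exhibit a composite Lyapunov function coupling a recursive-least-squares-type weighted norm of the ``fast'' error $\tilde\vartheta_k:=\vartheta_k-\theta^*$ with a weighted norm of the momentum gap $\psi_k:=\theta_k-\vartheta_k$, and to show it contracts geometrically. First I would pass to error coordinates. Since $L_k$ is quadratic, $\nabla L_k(\theta_{k+1})=\phi_k\phi_k^\top(\theta_{k+1}-\theta^*)=\phi_k\bar e_k$ with $\bar e_k:=\phi_k^\top(\theta_{k+1}-\theta^*)$, and \eqref{eq:3} gives $\theta_{k+1}-\theta^*=\tilde\vartheta_k+(1-\beta)\psi_k$, so $\bar e_k=a_k+(1-\beta)b_k$ with $a_k:=\phi_k^\top\tilde\vartheta_k$, $b_k:=\phi_k^\top\psi_k$. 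Then \eqref{eq:2}--\eqref{eq:3} read
\begin{align*}
  \tilde\vartheta_{k+1} &= \tilde\vartheta_k - \frac{\bar e_k}{\N_k}F_k\phi_k, &
  \psi_{k+1} &= (1-\beta)\psi_k + \frac{\bar e_k}{\N_k}F_k\phi_k .
\end{align*}
I would also record the two facts about \eqref{eq:4} that carry the argument: with $q_k:=\phi_k^\top F_{k-1}\phi_k\ge 0$ and $s_k:=1+(\eta-\kappa)q_k$ (so $\N_k=1+\eta q_k$ and $s_k>0$ because $\eta\ge\kappa$, which the hypotheses on $\eta$ imply), the matrix inversion lemma gives $F_k^{-1}=\tfrac1\lambda F_{k-1}^{-1}+\tfrac{\kappa}{\lambda s_k}\phi_k\phi_k^\top$ and $\phi_k^\top F_k\phi_k=\lambda q_k s_k/\N_k$; and Lemma~\ref{lemma:1} together with the companion lower-bound lemma give $F_{\min}I\le F_k\le F_{\max}I$ (only the upper bound being needed when $\lambda=1$), so $F_k^{-1}$ is well defined and sandwiched.

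Next I would take $V_k:=\tilde\vartheta_k^\top F_{k-1}^{-1}\tilde\vartheta_k+\psi_k^\top F_{k-1}^{-1}\psi_k$ (the equal weighting of the two blocks being the choice that makes the cross terms cancel) and compute $\Delta V_k$. Substituting the error dynamics and $F_k^{-1}$, the two contractions should fall out cleanly: the forgetting in $F_k^{-1}$ produces $-(1-\tfrac1\lambda)\tilde\vartheta_k^\top F_{k-1}^{-1}\tilde\vartheta_k$ on the $\vartheta$-block and, combined with the momentum factor, $-(1-\tfrac{(1-\beta)^2}{\lambda})\psi_k^\top F_{k-1}^{-1}\psi_k$ on the $\psi$-block; because the quadratic is $F_k^{-1}$-weighted, the diagonal-cross terms collapse through $F_k^{-1}F_k=I$ to $-\tfrac{2\bar e_k a_k}{\N_k}$ and $+\tfrac{2(1-\beta)\bar e_k b_k}{\N_k}$, whose stray $\pm\tfrac{2(1-\beta)}{\N_k}a_kb_k$ pieces cancel between the blocks --- this is what keeps the $\vartheta$-contraction free of $b_k$ corrections. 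What is left is a residual quadratic form $Q_k=C_aa_k^2+C_{ab}a_kb_k+C_bb_k^2$ whose coefficients are explicit rational functions of $q_k$ through $\N_k,s_k$ (and of $\kappa,\lambda,(1-\beta)^2$), e.g.\ $C_a=\tfrac{\kappa}{\lambda s_k}-\tfrac{2}{\N_k}+\tfrac{2\lambda q_ks_k}{\N_k^3}$.

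The core step is to bound $Q_k$ so that $\Delta V_k\le -c_1\norm{\tilde\vartheta_k}^2-c_2\norm{\psi_k}^2$ with $c_1,c_2$ as in \eqref{eq:c1}--\eqref{eq:c2}. I would use the Cauchy--Schwarz bounds $a_k^2\le q_k\,\tilde\vartheta_k^\top F_{k-1}^{-1}\tilde\vartheta_k$, $b_k^2\le q_k\,\psi_k^\top F_{k-1}^{-1}\psi_k$, $|a_kb_k|\le q_k\sqrt{(\tilde\vartheta_k^\top F_{k-1}^{-1}\tilde\vartheta_k)(\psi_k^\top F_{k-1}^{-1}\psi_k)}$ and $q_k/\N_k<1/\eta$, then complete the square in $a_k$: provided $C_a\le 0$, the $a_k$ part is discarded and $Q_k$ is dominated by a multiple of $q_k\,\psi_k^\top F_{k-1}^{-1}\psi_k$. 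Nonpositivity of $C_a$ amounts to $\tfrac{\kappa\N_k}{\lambda s_k}+\tfrac{2\lambda q_ks_k}{\N_k^2}\le 2$ for all $q_k\ge 0$; at $q_k=0$ this is exactly $\kappa\le 2\lambda$, and maximizing the left side over $q_k$ (a quadratic computation whose relevant root produces $\sqrt{5\kappa^2-4\lambda\kappa+4\lambda^2}$) yields precisely the first lower bound on $\eta$ in Theorem~\ref{theo:1}. Bounding the surviving $\psi$-coefficient at its worst case over $q_k\ge 0$ should contribute at most $(1-\beta)^2\big[\tfrac{\kappa}{\lambda(\eta-\kappa)}+\tfrac{4\lambda}{\eta^2}\big]\psi_k^\top F_{k-1}^{-1}\psi_k$, making the net $\psi$-coefficient $-\{1-(1-\beta)^2[\tfrac1\lambda+\tfrac{\kappa}{\lambda(\eta-\kappa)}+\tfrac{4\lambda}{\eta^2}]\}$, which is $\le 0$ exactly when $0<\beta<2$ and $\eta$ exceeds the second stated threshold; finally $F_{k-1}^{-1}\ge F_{\max}^{-1}I$ turns the two negative coefficients into $-c_1\norm{\tilde\vartheta_k}^2$ and $-c_2\norm{\psi_k}^2$. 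Given this one-step dissipation, (i) follows since $V_k$ is nonincreasing and $V_k\ge F_{\max}^{-1}(\norm{\tilde\vartheta_k}^2+\norm{\psi_k}^2)$, so $\vartheta_k-\theta^*$ and $\theta_k-\vartheta_k$ are bounded; and (ii) follows by applying Theorem~\ref{theo:geo-stable} (equivalently, iterating the dissipation), with rate $\mu=\min\{c_1,c_2\}$ from \eqref{eq:mu}. When $\lambda=1$ one has $c_1=0$ so (ii) is vacuous, but $\Delta V_k\le 0$ still yields (i).

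I expect the main obstacle to be the third step: organizing the residual form $Q_k$ and showing the two algebraic side conditions on $\eta$ are the sharp ones needed for nonpositivity --- in particular the maximization over $q_k\ge 0$ producing the closed form $\tfrac{\lambda(\kappa+2\lambda)+\lambda\sqrt{5\kappa^2-4\lambda\kappa+4\lambda^2}}{2\lambda-\kappa}$, and keeping careful track of which worst case ($q_k=0$, $q_k\to\infty$, or an interior critical point) governs each coefficient. The $\ell_\infty$ boundedness and the exponential estimate are then routine consequences of the dissipation inequality.
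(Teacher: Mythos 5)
Your outline is the paper's proof in all essentials: the same equally weighted Lyapunov function $V_k=(\vartheta_k-\theta^*)^\top F_{k-1}^{-1}(\vartheta_k-\theta^*)+(\theta_k-\vartheta_k)^\top F_{k-1}^{-1}(\theta_k-\vartheta_k)$, the same matrix-inversion-lemma identity for $F_k^{-1}$, the same cancellation of the $\pm\frac{2(1-\beta)}{\N_k}a_kb_k$ pieces between the two blocks, the identity $\phi_k^\top F_k\phi_k=\lambda q_k s_k/\N_k$ (the paper's $A_k$), the same Cauchy--Schwarz step with $q_k/\N_k\le 1/\eta$, and the same appeal to Lemma \ref{lemma:1} and Theorem \ref{theo:geo-stable}. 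The one step that does not close as written is your handling of the residual cross term $C_{ab}a_kb_k=2A_k(1-\beta)a_kb_k$. The paper absorbs it via $2A_k(1-\beta)a_kb_k=A_ka_k^2+A_k(1-\beta)^2b_k^2-A_k\bigl[a_k-(1-\beta)b_k\bigr]^2$ and discards the manifestly nonpositive square; this adds a \emph{second} copy of $A_k$ to each diagonal coefficient, so the condition that must hold is $C_a+A_k\le 0$, i.e.\ $\frac{\kappa\N_k}{\lambda s_k}+\frac{4\lambda q_k s_k}{\N_k^2}\le 2$, which after the bounds $\frac{\kappa\eta\N_k}{\lambda[(\eta-\kappa)\N_k+\kappa]}\le\frac{\kappa\eta}{\lambda(\eta-\kappa)}$ and $A_k\N_k\le 2\lambda/\eta$ becomes $\frac{\kappa\eta}{\lambda(\eta-\kappa)}+\frac{4\lambda}{\eta}\le 2$; the larger root of the associated quadratic is exactly $\frac{\lambda(\kappa+2\lambda)+\lambda\sqrt{5\kappa^2-4\lambda\kappa+4\lambda^2}}{2\lambda-\kappa}$. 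Your condition $C_a\le 0$ carries only one $A_k$ and leads instead to $\frac{\kappa\eta}{\lambda(\eta-\kappa)}+\frac{2\lambda}{\eta}\le 2$, whose discriminant produces $\sqrt{\lambda^2-2\lambda\kappa+3\kappa^2}$, not the stated threshold --- so the theorem's hypothesis on $\eta$ would not be recovered. (The $4\lambda/\eta^2$ you correctly carry into the $\psi$-coefficient is precisely the footprint of that second $A_k$; your $a_k$-coefficient must pick it up too.)

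Relatedly, completing the square in $a_k$ by dividing by $C_a$ is fragile: it leaves $-C_{ab}^2/(4C_a)$ on the $b_k^2$ side, which is uncontrolled as $C_a\to 0^-$, and at the stated threshold for $\eta$ the worst case over $q_k$ does drive the relevant diagonal coefficient to zero, so no uniform bound is available there. Substituting the paper's division-free device of peeling off $-A_k[a_k-(1-\beta)b_k]^2$ fixes both issues at once; with that replacement the rest of your plan (boundedness from $\Delta V_k\le 0$ and $V_k\ge F_{\max}^{-1}(\norm{\vartheta_k-\theta^*}^2+\norm{\theta_k-\vartheta_k}^2)$, then the exponential rate $\mu=\min\{c_1,c_2\}$ via Theorem \ref{theo:geo-stable}) goes through exactly as in the paper.
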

\begin{proof}
  Applying matrix inversion lemma to \eqref{eq:4}, we obtain
  \begin{equation}
    \label{eq:5}
    \begin{split}
      F_k^{-1} &= \frac{1}{\lambda}F_{k-1}^{-1} + \frac{\kappa}{\lambda}\frac{\phi_k\phi_k^\top}{\N_k - \kappa\phi_k^\top F_{k-1}\phi_k}
    \end{split}
  \end{equation}
  Consider the candidate Lyapunov function
  \begin{equation}
    \label{eq:lyap}
    V_k = \left(\vartheta_k - \theta^*\right)^\top F_{k-1}^{-1}\left(\vartheta_k - \theta^*\right) + (\theta_k - \vartheta_k)^\top F_{k-1}^{-1}(\theta_k - \vartheta_k)
  \end{equation}
  The increment $\Delta V_k := V_{k+1} - V_k$ may be expanded as
  \begin{align*}
    &\Delta V_k\\
    &= (\vartheta_{k+1} - \theta^*)^\top F_{k}^{-1}(\vartheta_{k+1} - \theta^*) + (\theta_{k+1} - \vartheta_{k+1})^\top F_{k}^{-1}(\theta_{k+1} - \vartheta_{k+1})\\
    &\quad -(\vartheta_k - \theta^*)^\top F_{k-1}^{-1}(\vartheta_k - \theta^*) - (\theta_k - \vartheta_k)^\top F_{k-1}^{-1}(\theta_k - \vartheta_k)\\
    &= \left[\vartheta_k - \theta^* - F_{k}\frac{\nabla L_k(\theta_{k+1})}{\N_k}\right]^\top F_{k}^{-1} \left[\vartheta_k - \theta^* - F_{k}\frac{\nabla L_k(\theta_{k+1})}{\N_k}\right]\\
    &\quad +\left[\theta_k - \beta(\theta_k - \vartheta_k) - \vartheta_k + F_{k}\frac{\nabla L_k(\theta_{k+1})}{\N_k}\right]^\top F_{k}^{-1} \left[\theta_k - \beta(\theta_k - \vartheta_k) - \vartheta_k + F_{k}\frac{\nabla L_k(\theta_{k+1})}{\N_k}\right]\\
    &\quad -(\vartheta_k - \theta^*)^\top F_{k-1}^{-1}(\vartheta_k - \theta^*) - (\theta_k - \vartheta_k)^\top F_{k-1}^{-1}(\theta_k - \vartheta_k)\\
    &= (\vartheta_k - \theta^*)^\top F_{k}^{-1}(\vartheta_k - \theta^*) + (1-\beta)^2(\theta_k - \vartheta_k)^\top F_{k}^{-1}(\theta_k - \vartheta_k) \\
    &\quad - \frac{2}{\N_k}(\vartheta_k - \theta^*)^\top\nabla L_k(\theta_{k+1}) + \frac{2(1 - \beta)}{\N_k}(\theta_k - \vartheta_k)^\top\nabla L_k(\theta_{k+1})\\
    &\quad + \frac{2}{\N_k^2}\left[\nabla L_k(\theta_{k+1})\right]^\top F_{k}\nabla L_k(\theta_{k+1})\\
    &\quad -(\vartheta_k - \theta^*)^\top F_{k-1}^{-1}(\vartheta_k - \theta^*) - (\theta_k - \vartheta_k)^\top F_{k-1}^{-1}(\theta_k - \vartheta_k)\\
    &= \frac{1}{\lambda}(\vartheta_k - \theta^*)^\top \!F_{k-1}^{-1}(\vartheta_k - \theta^*) + \frac{\kappa}{\lambda[1 + (\eta - \kappa)\phi_k^\top F_{k-1}\phi_k]}(\vartheta_k - \theta^*)^\top\phi_{k}\phi_{k}^\top(\vartheta_k - \theta^*) \\
    &\quad +\frac{(1-\beta)^2}{\lambda}(\theta_k - \vartheta_k)^\top F_{k-1}^{-1}(\theta_k - \vartheta_k) + \frac{\kappa(1-\beta)^2}{\lambda[1 + (\eta - \kappa)\phi_k^\top F_{k-1}\phi_k]} (\theta_k - \vartheta_k)^\top\phi_{k}\phi_{k}^\top(\theta_k - \vartheta_k)\\
    &\quad - \frac{2}{\N_k}(\vartheta_k - \theta^*)^\top\nabla L_k(\theta_{k+1}) + \frac{2(1 - \beta)}{\N_k}(\theta_k - \vartheta_k)^\top\nabla L_k(\theta_{k+1}) \\
    &\quad + \frac{2}{\N_k^2}\left[\nabla L_k(\theta_{k+1})\right]^\top F_{k}\nabla L_k(\theta_{k+1})\\
    &\quad -(\vartheta_k - \theta^*)^\top F_{k-1}^{-1}(\vartheta_k - \theta^*) - (\theta_k - \vartheta_k)^\top F_{k-1}^{-1}(\theta_k - \vartheta_k)
  \end{align*}
  Now substitute $\nabla L_k(\theta_{k+1}) = \phi_k\phi_k^\top\tilde{\theta}_{k+1}$ into the above, we get
  \begin{align*}
    \Delta V_k
    &= \frac{1}{\lambda}(\vartheta_k - \theta^*)^\top F_{k-1}^{-1}(\vartheta_k - \theta^*) + \frac{\kappa}{\lambda[1 + (\eta - \kappa)\phi_k^\top F_{k-1}\phi_k]}\|(\vartheta_k - \theta^*)^\top\phi_{k}\|^2\\
    &\quad +\frac{(1-\beta)^2}{\lambda}(\theta_k - \vartheta_k)^\top F_{k-1}^{-1}(\theta_k - \vartheta_k) + \frac{\kappa(1-\beta)^2}{\lambda[1 + (\eta - \kappa)\phi_k^\top F_{k-1}\phi_k]} \|(\theta_k - \vartheta_k)^\top\phi_{k}\|^2\\
    &\quad -\frac{2}{\N_k}(\vartheta_k - \theta^*)^\top\phi_k\phi_k^\top\tilde{\theta}_{k+1} + \frac{2(1 - \beta)}{\N_k}(\theta_k - \vartheta_k)^\top\phi_k\phi_k^\top\tilde{\theta}_{k+1} \\
    &\quad + \frac{2}{\N_k^2}\left[\nabla L_k(\theta_{k+1})\right]^\top F_{k}\nabla L_k(\theta_{k+1})\\
    &\quad -(\vartheta_k - \theta^*)^\top F_{k-1}^{-1}(\vartheta_k - \theta^*) - (\theta_k - \vartheta_k)^\top F_{k-1}^{-1}(\theta_k - \vartheta_k)
  \end{align*}
  Since $\tilde{\theta}_{k+1} = \theta_{k+1} - \vartheta_k + \vartheta_k - \theta^* = (1 - \beta)(\theta_k - \vartheta_k) + (\vartheta_k - \theta^*)$,
  \begin{align*}
    \Delta V_k
    &= \frac{1}{\lambda}(\vartheta_k - \theta^*)^\top F_{k-1}^{-1}(\vartheta_k - \theta^*) + \frac{\kappa}{\lambda[1 + (\eta - \kappa)\phi_k^\top F_{k-1}\phi_k]}\|(\vartheta_k - \theta^*)^\top\phi_{k}\|^2\\
    &\quad +\frac{(1-\beta)^2}{\lambda}(\theta_k - \vartheta_k)^\top F_{k-1}^{-1}(\theta_k - \vartheta_k) + \frac{\kappa(1-\beta)^2}{\lambda[1 + (\eta - \kappa)\phi_k^\top F_{k-1}\phi_k]} \|(\theta_k - \vartheta_k)^\top\phi_{k}\|^2\\
    &\quad -\frac{2}{\N_k}\|(\vartheta_k - \theta^*)^\top\phi_k\|^2 - \frac{2(1 - \beta)}{\N_k}(\vartheta_k - \theta^*)^\top\phi_k\phi_k^\top(\theta_k - \vartheta_k)\\
    &\quad + \frac{2(1 - \beta)^2}{\N_k}\|(\theta_k - \vartheta_k)^\top\phi_k\|^2 + \frac{2(1 - \beta)}{\N_k}(\theta_k - \vartheta_k)^\top\phi_k\phi_k^\top(\vartheta_k - \theta^*)\\
    &\quad + \frac{2}{\N_k^2}\left[\nabla L_k(\theta_{k+1})\right]^\top F_{k}\nabla L_k(\theta_{k+1})\\
    &\quad -(\vartheta_k - \theta^*)^\top F_{k-1}^{-1}(\vartheta_k - \theta^*) - (\theta_k - \vartheta_k)^\top F_{k-1}^{-1}(\theta_k - \vartheta_k)\\
    &= \frac{1}{\lambda}(\vartheta_k - \theta^*)^\top F_{k-1}^{-1}(\vartheta_k - \theta^*) + \frac{\kappa}{\lambda[1 + (\eta - \kappa)\phi_k^\top F_{k-1}\phi_k]}\|(\vartheta_k - \theta^*)^\top\phi_{k}\|^2\\
    &\quad +\frac{(1-\beta)^2}{\lambda}(\theta_k - \vartheta_k)^\top F_{k-1}^{-1}(\theta_k - \vartheta_k) + \frac{\kappa(1-\beta)^2}{\lambda[1 + (\eta - \kappa)\phi_k^\top F_{k-1}\phi_k]} \|(\theta_k - \vartheta_k)^\top\phi_{k}\|^2\\
    &\quad - \frac{2}{\N_k}\|(\vartheta_k - \theta^*)^\top\phi_k\|^2 + \frac{2(1 - \beta)^2}{\N_k}\|(\theta_k - \vartheta_k)^\top\phi_k\|^2 + \frac{2}{\N_k^2}\tilde{\theta}_{k+1}^\top\phi_k\phi_k^\top F_k\phi_k\phi_k^\top\tilde{\theta}_{k+1}\\
    &\quad -(\vartheta_k - \theta^*)^\top F_{k-1}^{-1}(\vartheta_k - \theta^*) - (\theta_k - \vartheta_k)^\top F_{k-1}^{-1}(\theta_k - \vartheta_k)
  \end{align*}

  Let $A_k = \frac{2\lambda}{\eta^2\N_k^3}[(\eta - \kappa)\N_k + \kappa](\N_k - 1)$, the above becomes
  \begin{align*}
    \Delta V_k
    &= \frac{1}{\lambda}(\vartheta_k - \theta^*)^\top F_{k-1}^{-1}(\vartheta_k - \theta^*) + \frac{\kappa}{\lambda[1 + (\eta - \kappa)\phi_k^\top F_{k-1}\phi_k]}\|(\vartheta_k - \theta^*)^\top\phi_{k}\|^2\\
    &\quad +\frac{(1-\beta)^2}{\lambda}(\theta_k - \vartheta_k)^\top F_{k-1}^{-1}(\theta_k - \vartheta_k) + \frac{\kappa(1-\beta)^2}{\lambda[1 + (\eta - \kappa)\phi_k^\top F_{k-1}\phi_k]} \|(\theta_k - \vartheta_k)^\top\phi_{k}\|^2\\
    &\quad - \frac{2}{\N_k}\|(\vartheta_k - \theta^*)^\top\phi_k\|^2 + \frac{2(1 - \beta)^2}{\N_k}\|(\theta_k - \vartheta_k)^\top\phi_k\|^2\\
    &\quad + A_k\left[\|(\theta_k - \vartheta_k)^\top\phi_k\|^2 + \|(\vartheta_k - \theta^*)\phi_k\|^2 \right. + \left. 2(\theta_k - \vartheta_k)^\top\phi_k\phi_k^\top(\vartheta_k - \theta^*)\right]\\
    &\quad -(\vartheta_k - \theta^*)^\top F_{k-1}^{-1}(\vartheta_k - \theta^*) - (\theta_k - \vartheta_k)^\top F_{k-1}^{-1}(\theta_k - \vartheta_k)
  \end{align*}

  Combining similar terms,
  \begin{align*}
    \Delta V_k
    &= \left(\frac{1}{\lambda} - 1\right)(\vartheta_k - \theta^*)^\top F_{k-1}^{-1}(\vartheta_k - \theta^*) + \left[\frac{(1-\beta)^2}{\lambda} - 1\right](\theta_k - \vartheta_k)^\top F_{k-1}^{-1}(\theta_k - \vartheta_k)\\
    &\quad +\left\{\frac{\kappa\eta}{\lambda[(\eta - \kappa)\N_k + \kappa]} - \frac{2}{\N_k} + A_k\right\}\|(\vartheta_k - \theta^*)^\top\phi_k\|^2\\
    &\quad +\left\{\frac{\kappa\eta}{\lambda[(\eta - \kappa)\N_k + \kappa]} + \frac{2}{\N_k} + A_k\right\} (1 - \beta)^2\|(\theta_k - \vartheta_k)^\top\phi_k\|^2\\
    &\quad +2A_k(1 - \beta)(\theta_k - \vartheta_k)^\top\phi_k\phi_k^\top(\vartheta_k - \theta^*)\\
    &= \left(\frac{1}{\lambda} - 1\right)(\vartheta_k - \theta^*)^\top F_{k-1}^{-1}(\vartheta_k - \theta^*) + \left[\frac{(1-\beta)^2}{\lambda} - 1\right](\theta_k - \vartheta_k)^\top F_{k-1}^{-1}(\theta_k - \vartheta_k)\\
    &\quad +\left\{\frac{\kappa\eta}{\lambda[(\eta - \kappa)\N_k + \kappa]} - \frac{2}{\N_k} + 2A_k\right\}\|(\vartheta_k - \theta^*)^\top\phi_k\|^2\\
    &\quad +\left\{\frac{\kappa\eta}{\lambda[(\eta - \kappa)\N_k + \kappa]} + \frac{2}{\N_k} + 2A_k\right\} (1 - \beta)^2\|(\theta_k - \vartheta_k)^\top\phi_k\|^2\\
    &\quad -A_k[(\vartheta_k - \theta^*)^\top\phi_k - (1 - \beta)(\theta_k - \vartheta_k)^\top\phi_k]^2
  \end{align*}

  From Cauchy-Schwarz inequality,
  \begin{align*}
    \frac{1}{\N_k}\|(\theta_k - \vartheta_k)^\top\phi_k\|^2 \leq \frac{1}{\eta}(\theta_k - \vartheta_k)^\top F_{k-1}^{-1}(\theta_k - \vartheta_k) 
  \end{align*}
  Therefore
  \begin{align*}
    \Delta V_k
    &\leq \left(\frac{1}{\lambda} - 1\right)(\vartheta_k - \theta^*)^\top F_{k-1}^{-1}(\vartheta_k - \theta^*) + \left[\frac{(1-\beta)^2}{\lambda} - 1\right](\theta_k - \vartheta_k)^\top F_{k-1}^{-1}(\theta_k - \vartheta_k)\\
    &\quad +\left\{\frac{\kappa\eta}{\lambda[(\eta - \kappa)\N_k + \kappa]} - \frac{2}{\N_k} + 2A_k\right\}\|(\vartheta_k - \theta^*)^\top\phi_k\|^2\\
    &\quad +\left\{\frac{\kappa\eta\N_k}{\lambda[(\eta - \kappa)\N_k + \kappa]} + 2 + 2A_k\N_k\right\} \frac{(1 - \beta)^2}{\eta} (\theta_k - \vartheta_k)^\top F_{k-1}^{-1}(\theta_k - \vartheta_k)\\
    &\quad -A_k[(\vartheta_k - \theta^*)^\top\phi_k - (1 - \beta)(\theta_k - \vartheta_k)^\top\phi_k]^2
  \end{align*}

  Since
  \begin{align*}
    \frac{\kappa\eta\N_k}{\lambda[(\eta - \kappa)\N_k + \kappa]} \leq \frac{\kappa\eta}{\lambda(\eta - \kappa)},
  \end{align*}
  and
  \begin{align*}
    A_k\N_k \leq \frac{2\lambda}{\eta}
  \end{align*}
  and also since $\lambda \geq 1$, $\kappa < 2\lambda$, $0 < \beta < 2$, $\eta \geq \max\left\{\frac{\lambda(\kappa + 2\lambda) + \lambda\sqrt{5\kappa^2 - 4\lambda\kappa + 4\lambda^2}}{2\lambda - \kappa}, \frac{4\lambda(1 - \beta)^2}{\lambda - (1 - \beta)^2}\right\}$, under persistent excitation and from Lemma \ref{lemma:1}, we get
  \begin{align*}
    \Delta V_k
    &\leq -c_1\|\vartheta_k - \theta^*\|^2 - c_2\|\theta_k - \vartheta_k\|^2\\
    &\quad -A_k[(\vartheta_k - \theta^*)^\top\phi_k - (1 - \beta)(\theta_k - \vartheta_k)^\top\phi_k]^2\\
    &\leq 0
  \end{align*}
  where $c_1$ and $c_2$ are defined in \eqref{eq:c1}-\eqref{eq:c2}. Thus $\vartheta_k - \theta^*\in\ell_\infty$ and $\theta_k - \vartheta_k\in\ell_\infty$. Furthermore,
  \begin{align*}
    \Delta V_k &\leq -\mu \left(\|\vartheta_k - \theta^*\|^2 + \|\theta_k - \vartheta_k\|^2\right),
  \end{align*}
  where $\mu$ is defined in \eqref{eq:mu}.

  Since $F_{\max}^{-1}(\|\vartheta_k - \theta^*\|^2 + \|\theta_k - \vartheta_k\|^2) \leq V_k \leq F_{\min}^{-1}(\|\vartheta_k - \theta^*\|^2 + \|\theta_k - \vartheta_k\|^2)$, and $\Delta V_k \leq -\mu\left(\|\vartheta_k - \theta^*\|^2 + \|\theta_k - \vartheta_k\|^2\right)$, according to Theorem \ref{theo:geo-stable}, $\|\vartheta_k - \theta^*\|\rightarrow 0$ and $\|\theta_k - \vartheta_k\|\rightarrow 0$ globally uniformly exponentially fast.
\end{proof}

\begin{remark}
  Theorem \ref{theo:1} states that under PE, both $\|\vartheta_k - \theta^*\|$ and $\|\theta_k - \vartheta_k\|$ go to zero exponentially fast. Together, $\|\vartheta_k - \theta^*\|^2 + \|\theta_k - \vartheta_k\|^2$ goes to zero exponentially fast.
\end{remark}

\begin{remark}
  Note that directly applying the covariance matrix update in RLS with forgetting to \eqref{eq:2} leads to Lyapunov stability in only very limited cases such as when $F_0$ is small. It is the introduction of new hyperparameters in the update of $F_k$ that gives more flexibility in the choice of hyperparameters and made global uniform exponential convergence possible.
\end{remark}

\begin{remark}
  From \eqref{eq:4} and \eqref{eq:5}, under weak or no excitation, the eigenvalues of $F_k$ keep increasing. To avoid the values of $F_k$ getting too large when excitation is weak, a barrier function such as the one in \cite{cui2022} can be applied to the update of $F_k$ in \eqref{eq:4}, or a variable forgetting factor such as the one introduced in \cite{bruce20a} can be considered. A complete proof of this case is beyond the scope of this paper and will be addressed in future work.
\end{remark}

\begin{remark}
  Exponential decrease of $V_k$ still happens when there is no persistent excitation in the regressors. However, that does not mean $\theta_k$ keeps converging to its true value. As an extreme example, when $\phi_k = 0$, $V_k$ converges to zero exponentially, due to the exponential increase of $F_k$, but both $\theta_k$ and $\vartheta_k$ are not changing.
\end{remark}


\begin{remark}
	As we will show in Section \ref{sec:numer-sim}, the additional benefits of the proposed algorithm compared to RLS with forgetting become apparent under weak excitation signals. The presence of momentum helps boost both output error and parameter error convergence towards zero.
\end{remark}

\section{Numerical Simulations}
\label{sec:numer-sim}
A linear discrete system is given by
\begin{equation}
	\label{eq:21}
	G(\bm{q}) = \frac{-0.6213\bm{q} + 0.5839}{\bm{q}^2 - 1.8403\bm{q} + 0.8591}.
\end{equation}
The problem is that the coefficients of \eqref{eq:21} are unknown and we use the proposed algorithm to identify them. 

\subsection{Exponential Parameter Convergence Under Persistent Excitation}
For identification, we apply the following signal as an input
\begin{equation}
  \label{eq:22}
  u(k) = 1 + \sin\left(\frac{3\pi k}{4}\right) + \sin\left(\frac{2\pi k}{5}\right) + \sin\left(\frac{\pi k}{5}\right).
\end{equation}

The proposed algorithm in \eqref{eq:1}-\eqref{eq:3} is tested. The forgetting factor in our algorithm is set to be $\lambda = 1.01$ and the initial values of the learning rate matrices are set to be $100 I$. The regressors and estimated parameters are all set to be zero at the initial step. For the proposed algorithm, we set $\beta = 0.6$, $\kappa = 0.7$ and $\eta = 3.6$, satisfying the assumptions in Theorem \ref{theo:1}.

Figure \ref{fig:1} and Figure \ref{fig:2} show output errors and parameter errors in semi-log scale, respectively. Figure \ref{fig:3} shows histories of the learning rate matrix in the algorithm. In this case, both the output error and parameter error converge exponentially towards zero.

\begin{figure}[!htb]
  \centering
  \includegraphics[width=0.7\linewidth]{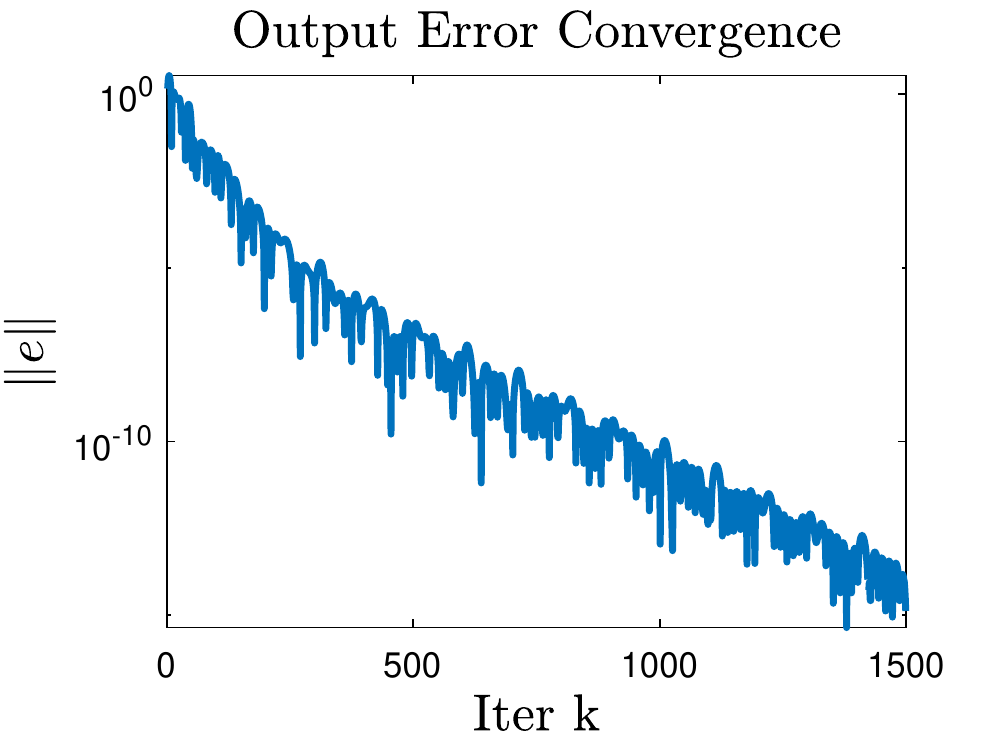}
  \caption{Output error $\|e\|$ of the proposed algorithm under PE.}
  \label{fig:1}
\end{figure}

\begin{figure}[!htb]
  \centering
  \includegraphics[width=0.7\linewidth]{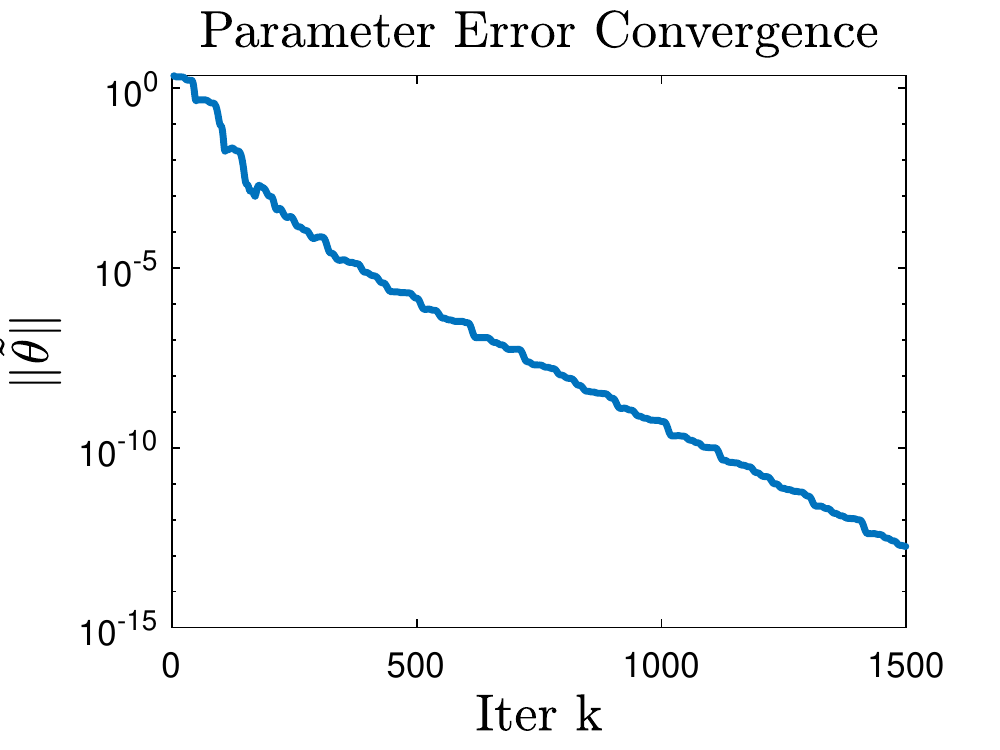}
  \caption{Parameter error $\|\tilde{\theta}\|$ of the proposed algorithm under PE.}
  \label{fig:2}
\end{figure}

\begin{figure}[!htb]
  \centering
  \includegraphics[width=0.7\linewidth]{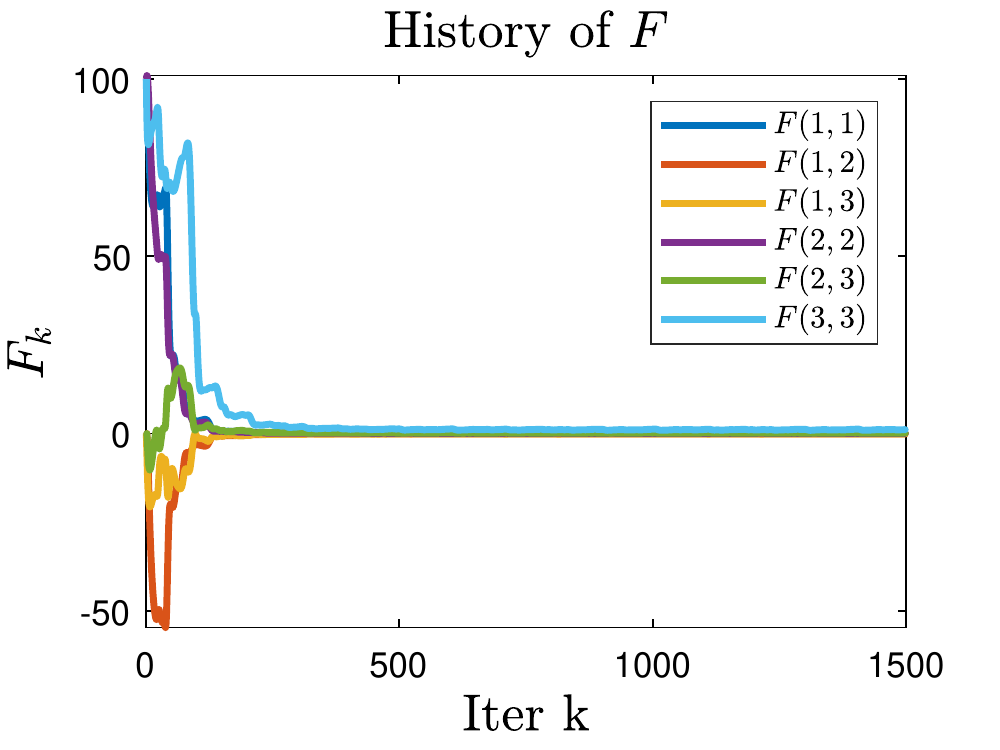}
  \caption{Learning rate matrix histories of the proposed algorithm.}
  \label{fig:3}
\end{figure}

\subsection{Comparison To RLS With Forgetting Under Increasingly Weaker Excitation}
RLS algorithm with a forgetting factor (RLS-FF) has been a widely used algorithm for online parameter estimation, and is quite similar to the algorithm proposed in this paper. In RLS-FF, the parameters are updated as follows \cite{Goodwin_1984}:
\begin{align}
	\label{eq:rls_p}
	P_{k} &= \frac{1}{\bar\lambda} P_{k-1} - \frac{P_{k-1}\phi_k\phi_k^\top P_{k-1}}{\bar\lambda + \phi_k^\top P_{k-1} \phi_k}, \\
	\label{eq:rls_v}
	\theta_{k+1} &= \theta_k + \frac{P_{k-1}\phi_k(y_{k+1} - \phi_k^\top \theta_k)}{\bar\lambda + \phi_k^\top P_{k-1}\phi_k},
\end{align}
where $P_k$ is the covariance matrix and $\bar\lambda$ is the forgetting factor. The difference between $P_k$ in \eqref{eq:rls_p} and $F_k$ in \eqref{eq:4} can be seen to be slight, but still makes a distinction as shown below.  The other major difference is an HT aspect (see \eqref{eq:2}-\eqref{eq:3}) in our algorithm, while a gradient descent idea is employed in \eqref{eq:rls_v}.

To demonstrate the added benefits of our algorithm compared to RLS with forgetting, we apply the following signal as an input to the system in \eqref{eq:21}:
\begin{align*}
	&u(k) = \\
	&1 + \exp(-0.03k)\!\left[ \sin\left(\frac{3\pi k}{4}\right) + \sin\left(\frac{2\pi k}{5}\right) + \sin\left(\frac{\pi k}{5}\right)\right]
\end{align*}
which is an increasingly weaker excitation signal. To ensure a fair comparison, we choose hyperparameters and initial values to be $\bar\lambda = 1 / \lambda = 0.99$, $\kappa = 1.06$, $\eta=3$, $\beta=0.5$, $F_0 = P_0 = 100 I$ and $\theta_0 = 0$ such that the time-varying matrices in the two algorithms are roughly the same as $k$ increases, see Figure \ref{fig:4}. Figure \ref{fig:5} shows the output error comparison and Figure \ref{fig:6} shows the parameter error comparison. Our algorithm demonstrates faster convergence results in this case. We speculate that the main reason for this faster convergence is the momentum term in the HB method, which in turn allows fast decrease in output error $e$. The time-varying $F_k$ exploits persistent excitation and ensures that this fast decrease in performance error translates into fast decrease in learning error.

\begin{figure}
  \centering
  \includegraphics[width=0.7\linewidth]{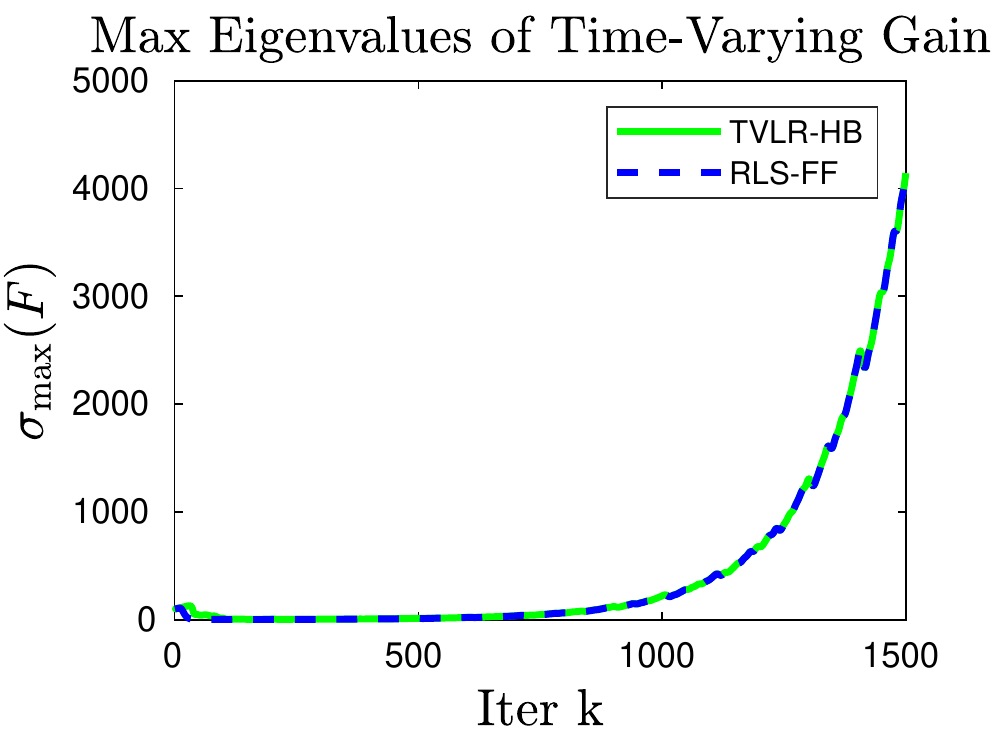}
  \caption{Comparison of the max eigenvalues of time-varying gain matrices between the proposed algorithm and RLS with forgetting.}
  \label{fig:4}
\end{figure}

\begin{figure}
  \centering
  \includegraphics[width=0.7\linewidth]{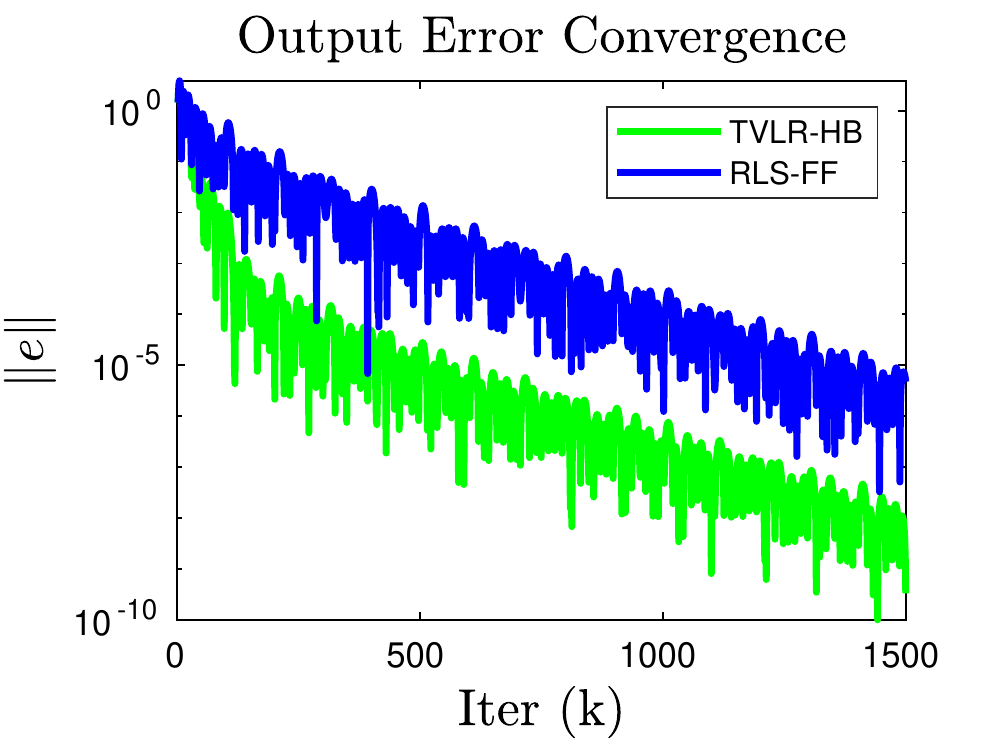}
  \caption{Comparison of output error convergence between the proposed algorithm and RLS with forgetting.}
  \label{fig:5}
\end{figure}

\begin{figure}
  \centering
  \includegraphics[width=0.7\linewidth]{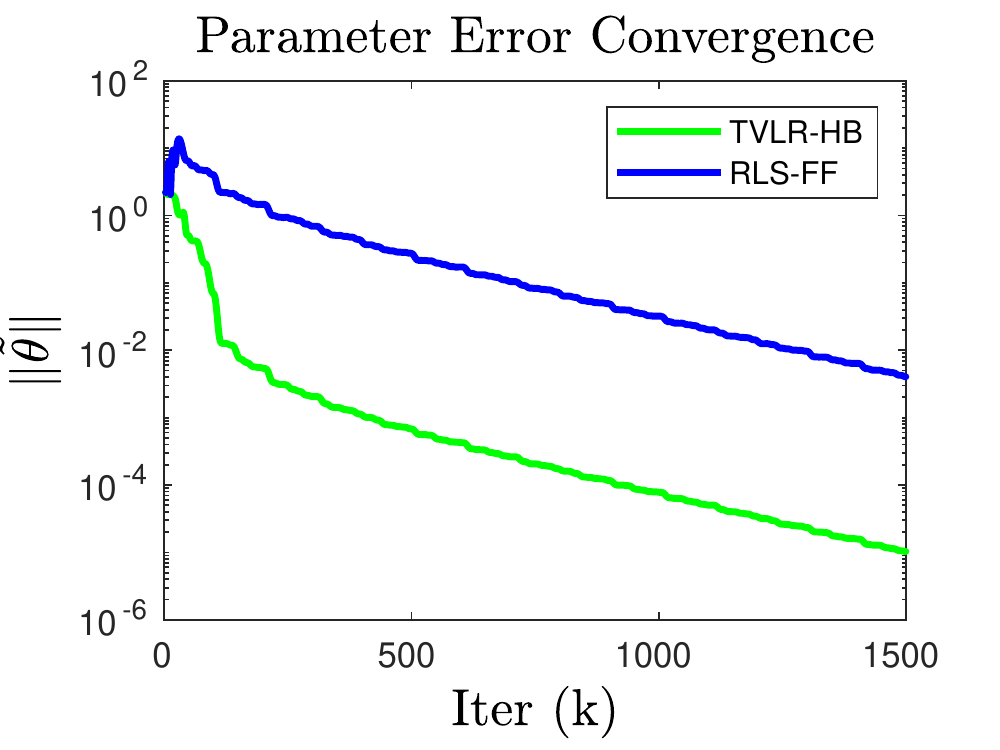}
  \caption{Comparison of parameter error convergence between the proposed algorithm and RLS with forgetting.}
  \label{fig:6}
\end{figure}

\section{Conclusion}
\label{sec:conclusion}
We introduced an online parameter estimation algorithm that adopts the ideas of momentum and time-varying learning rate. Under persistent excitation, the algorithm results in exponential convergence of the parameter error towards zero. Compared to recursive least squares with a forgetting factor, the presence of momentum in the update provides more flexibility. As shown in the simulation results, this flexibility translates into better performance and learning when the excitation is weak. Similar to recursive least squares with forgetting, one disadvantage of the algorithm is the unboundedness of the learning rate matrix when persistent excitation is not assured. In that case, projection operators need to be included to regulate the behavior of the learning rate matrix, which will be considered in future works.

\bibliographystyle{IEEEtran}
\bibliography{IEEEabrv,References}
\end{document}